\documentclass[10pt]{amsart}

\usepackage[T1]{fontenc}
\usepackage[utf8]{inputenc}
\usepackage[a4paper,margin=30mm]{geometry}
\usepackage{lmodern}
\usepackage{microtype}
\usepackage{amsmath,amssymb,mathtools,amsthm,mathrsfs}
\usepackage{bm,booktabs,graphicx,float,tikz}
\usepackage[hidelinks]{hyperref}

\newtheorem{theorem}{Theorem}[section]
\newtheorem{proposition}[theorem]{Proposition}
\newtheorem{corollary}[theorem]{Corollary}

\newcommand{\R}{\mathbb{R}}

\newcommand{\dd}{\,\mathrm{d}}

\DeclareMathOperator{\aff}{aff}

\title{Direct Vertex-Derivative Coefficient Propagation for\\
Homogeneous Numerical Integration (HNI)}
\author{Jean B. Lasserre}
\thanks{This work has supported by the AI Interdisciplinary Institute ANITI funded by the France 2030 program under the Grant agreements 
ANR-19-P3IA-0004 and ANR-23-IACL-0002.}
\address{LAAS-CNRS, 7 avenue du Colonel Roche, BP 54200,\\
 31031 Toulouse, France}
\email{lasserre@laas.fr}
\date{}

\begin{document}

\maketitle
\begin{abstract}
Homogeneous numerical integration (HNI) combines Euler's identity and
Stokes' theorem to integrate homogeneous polynomials over a polytope $P$.
Existing algorithms return scalar integrals or reusable moment families.
Any resulting moment table can be
recoded afterward: once all degree-$q$ moments are known, integration on
$\mathcal H_q$ (degree-$q$ forms) has a trivial representation by a pure order-$q$ distribution
supported at any single arbitrary point.
Our contribution is instead to propagate functionals through the
face complex, once for $(P,q)$ and without first forming the moment vector,
to obtain a signed vertex-derivative coefficient functional on
$\mathcal H_q$, specific to $P$ and determined directly by its supplied face
geometry and chosen anchors.  A finite-dimensional Riesz framework describes its
nonuniqueness, gives an intrinsic $L^2(P)$ error for restricted formulas, and
supports weighted post-processing.  The independently generated degree-$q$
and degree-$2q$ coefficient vectors satisfy an explicit compatibility
relation.  Vertex anchors can prune the propagated representation as shown in some 
polygon examples and an offline benchmark quantify this effect without
claiming global cardinality optimality or improved conditioning.  The
construction covers convex polytopes and
oriented non-convex polyhedral domains in arbitrary dimension.
\end{abstract}

\noindent\textbf{Keywords.}
Homogeneous Numerical Integration (HNI); polytope; reproducing kernel;\\
Riesz representative; cubature; moment problem.

\medskip
\noindent\textbf{Mathematics Subject Classification (2020).}
65D30, 41A55, 46E22, 52B11, 44A60.

\section{Introduction}
The basic setting is a compact full-dimensional convex polytope in the
affine space $\R^d$, written
\begin{equation}\label{eq:P-description}
 P=\{x\in\R^d:a_r^{\mathsf T}x\le b_r,
                 \ r=1,\ldots,s\},
 \qquad 0\in\operatorname{int}P,
\end{equation}
with an irredundant system of facet inequalities.  The interior-point choice
is for convenience 
but is not required by the
Euler--Stokes recurrence.

\paragraph{Geometric input.}
In the convex case, the implementation uses the vertex coordinates, the
face-incidence complex, and a chosen anchor in each positive-dimensional
face; these data can be generated from a standard $H$- or $V$-description of $P$.
The same notation $P$ is also used for the closure of a bounded, connected,
possibly non-convex polyhedral domain with nonempty interior.  In that case,
one must additionally supply its oriented boundary complex, including its
boundary components, incidence signs, and consistent outward side. 
It is \emph{not} a decomposition of $P$ into convex elementary cells.
Details are provided in the
Online Resources and the extended version; the mathematical extension is
specified in Section~2.

HNI combines
Euler's identity and Stokes' theorem and descends
from $P$ through its faces to the vertices.  The basic identities originate
in \cite{Lasserre83,Lasserre1998,Lasserre1999}; algorithms, non-convex extensions, and
applications on polytopic meshes appear in
\cite{MousaviSukumar2011,ChinLasserreSukumar2015,AntoniettiHoustonPennesi2018,RadleyHoustonHubbard2024}
for the two- and three-dimensional cases.
The last reference gives a graph organization, complexity analysis, and a
quadrature-free discontinuous Galerkin implementation.

HNI is also used as an assembly tool in several discretizations of PDEs.
Antonietti et al.~\cite{AntoniettiHoustonPennesi2018} treat high-order
discontinuous Galerkin methods on polytopic meshes; Benvenuti et
al.~\cite{BenvenutiEtAl2019} use HNI in an extended virtual element method for
singular and cracked Laplace problems; and Chin and Sukumar
\cite{ChinSukumar2019} and Chin et al.~\cite{ChinEtAl2021} apply it in
spectral extended finite elements on cut cells for elastostatics and
elastodynamics.  Bacuta et al.~\cite{BacutaEtAl2021} employ HNI for local
Gram and stiffness matrices in a three-dimensional DPG method, while Di
Pietro and Droniou~\cite{DiPietroDroniou2023} use it as an integration
subroutine in an arbitrary-order discrete de Rham method for magnetostatics.
Together with the quadrature-free transport implementation of
\cite{RadleyHoustonHubbard2024}, these applications motivate a short
clarification of the finite-dimensional functional that lies behind HNI as a
general integration tool on convex and non-convex polytopes in arbitrary
dimension.  This broader perspective is the main goal of the present
paper.

The present construction is intended as a complement to the efficient and
carefully developed algorithms of
\cite{ChinLasserreSukumar2015,AntoniettiHoustonPennesi2018,RadleyHoustonHubbard2024},
not as their replacement.  From a user's perspective, it may be viewed as a
functional ``X-ray'' of their internal Euler--Stokes machinery.  Those
algorithms return a scalar integral or reusable families of monomial and
intermediate face integrals, without repeating the complete descent for every
moment.  Their terminal vertex and derivative evaluations are already
implicit---and, for a supplied polynomial, operational.  The
proposed implementation retains the same face traversal but exposes and
collects contributions reaching each terminal functional $D^\beta|_v$ into a
polynomial-independent vector $C^{(q)}$.
If $E_q$ is its evaluation matrix on a monomial basis of $\mathcal H_q$, then
$b_q=E_qC^{(q)}$ is the resulting degree-$q$ moment vector.  
Thus the contribution is neither a new descent identity nor the first use of vertex derivatives. It is the explicit construction and retention of a reusable, polynomial-independent coefficient vector obtained by aggregating equal terminal vertex-derivative functionals during the face descent, rather than retaining only the final scalar integral or root moment vector returned by the cited algorithms. 
It makes anchor-dependent pruning explicit, supports restricted-information representations, and permits the degree-\(q\)/degree-\(2q\) compatibility relation to be written directly at coefficient level. No claim is made of lower cost or greater general reusability than optimized moment-table methods.


\noindent
{\bf Contribution.}
The principal contribution is conceptual as well as algorithmic.  Instead
of outputting only the canonical moment vector $b_q$, the proposed method
produces an explicit, geometry-adapted vertex-derivative representation
$C^{(q)}$ of the same integration functional.  An adapted
coefficient-propagation algorithm computes this structured lifting directly:
\begin{enumerate}
\item an adapted dynamic program explicitly produces the signed coefficient
vector representing integration on $\mathcal H_q$ directly from the oriented
face geometry, without first forming the moment vector or solving a global
representation system, and retains a reusable functional adapted to vertex
values and derivatives; the dynamic programming, reuse of lower-dimensional
face states, and anchor-based pruning of the existing HNI implementations
remain directly applicable. The output is changed from a
numerical monomial data to a sparse map indexed by terminal
vertex--derivative pairs;
\item the Riesz/least-squares identity describes nonuniqueness, provides
restricted formulas with an intrinsic $L^2(P)$ error, and supports weighted
post-processing;
\item coherent or exhaustively selected face-vertex anchors eliminate
recurrence branches and substantially prune the propagated output in the
reported convex and non-convex examples; and
\item the explicit outputs reveal a coefficient-level compatibility between
independently propagated HNI formulas at degrees $q$ and $2q$, and all
computational claims are accompanied by reproducible exactness checks.
\end{enumerate}
This is complementary to, rather than a replacement for, the efficient
moment-family implementations in the cited literature.  Online Resource~1
contains the complete formal propagation algorithm and the two- and
three-dimensional orientation requirements.  Online Resource~2 gives the
full inputs, coefficient output, and verification of the examples.  

\section{The reusable HNI functional}
Given an input degree $q\in\mathbb{N}$, set:
\[
 \mathcal H_q\,:=\{p\in\R[x_1,\ldots,x_d]:p(tx)=t^qp(x)\},\qquad
 m\,:=\dim\mathcal H_q=\binom{d+q-1}{q}.
\]
For the convex polytope in \eqref{eq:P-description}, a nonempty subset
$F\subseteq P$ is a face if there are $a\ne0$ and $\beta\in\R$ such that
\[
 P\subseteq\{x:a^{\mathsf T}x\le\beta\},\qquad
 F=P\cap\{x:a^{\mathsf T}x=\beta\};
\]
by convention, $P$ itself is also a face.  A $k$-face satisfies
$\dim(\aff F)=k$, where
\[
 \aff F=\left\{\sum_i\theta_ix_i:x_i\in F,\ \sum_i\theta_i=1\right\}.
\]
The $0$-faces are the vertices, denoted $\operatorname{vert}(P)$; the
$(d-1)$-faces are the facets.  If $F$ is a $k$-face, $G\prec F$ denotes one
of its relative $(k-1)$-facets, considered in $\aff F$, and $\mu_k$ denotes 
the $k$-dimensional Lebesgue measure on $\aff F$.

For the non-convex extension, $P$ is the closure of a bounded polyhedral
Lipschitz domain whose boundary is supplied as a compatible finite oriented
polyhedral complex.  It is \emph{not} decomposed into convex full-dimensional cells:
$P$ remains the single root domain, while its lower-dimensional states are
the boundary cells.  Thus $G\prec F$ means an oriented boundary facet and
terminal states include vertices introduced by an optional boundary
refinement. A simple two-dimensional illustrative example is displayed  in Figure 1.

\begin{figure}[ht]
\label{fig-1}
\centering
\includegraphics[width=.45\textwidth]{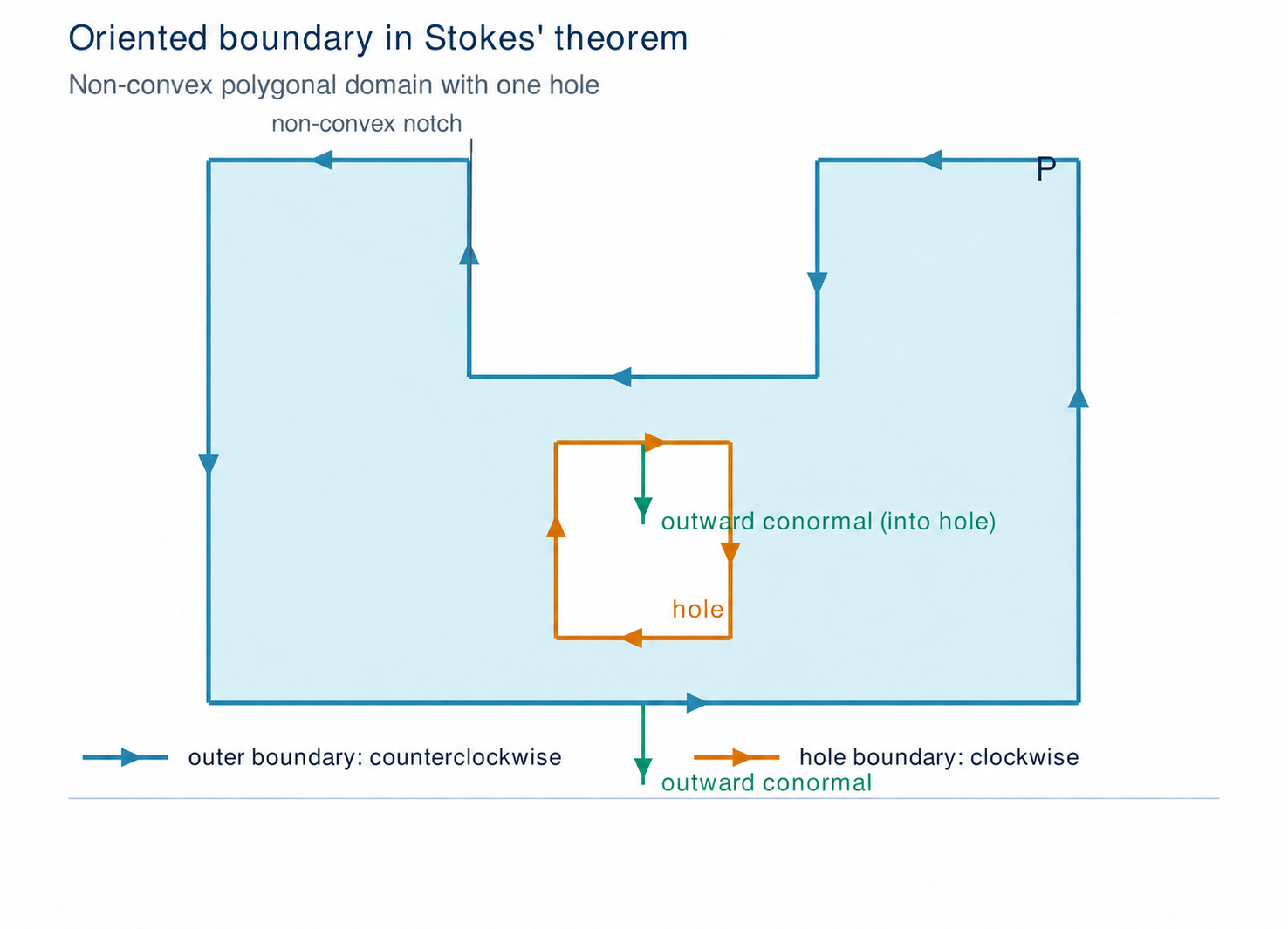}
\caption{{\small Non-convex polygonal domain with a hole: The exterior cycle is
counterclockwise, the hole cycle is clockwise, and every conormal points out
of the domain.  Thus inner-boundary conormals point into the hole.} }
\end{figure}

For an anchor $z_F\in\aff F$ and a relative facet $G\prec F$, let
$n_{F,G}$ be the outward unit conormal in $\aff F$ and
$h_{F,G}=(x-z_F)\cdot n_{F,G}$ for $x\in G$. 
Denote by \(\operatorname{Div}_F X\) the tangential divergence of a tangential vector field \(X\) in \(\operatorname{aff}F\), and by $\nabla_Fg\,(=\prod_F\nabla g)$ the projection of $\nabla g$ on $F$.

\begin{theorem}[Facewise Euler--Stokes recurrence]\label{th:rec}
For $p\in\mathcal H_q$, a $k$-face $F$, and $|\alpha|\le q$, put
$I(F,\alpha):=\int_FD^\alpha p\dd\mu_k$.  If $k\ge1$, then
\begin{equation}\label{eq:rec}
 (k+q-|\alpha|)I(F,\alpha)=
 \sum_{G\prec F}h_{F,G}I(G,\alpha)
 +\sum_{i=1}^d(z_F)_iI(F,\alpha+e_i),
\end{equation}
where the second sum is absent for $|\alpha|=q$.  At a vertex $v$,
$I(\{v\},\alpha)=D^\alpha p(v)$.
\end{theorem}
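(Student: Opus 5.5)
The plan is to apply the divergence theorem on $\aff F$ to the tangential vector field $X(x)=(x-z_F)\,g(x)$, with $g:=D^\alpha p$. Here $g$ is homogeneous of degree $q-|\alpha|$ (or zero). Since $F$ is a $k$-face, $x-z_F$ is tangent to $\aff F$ for $x\in\aff F$, because $z_F\in\aff F$. So $X$ is a tangential field and Stokes' theorem in the $k$-dimensional affine space $\aff F$ applies. In the non-convex case we use the supplied oriented boundary complex of $F$, whose outward conormals are $n_{F,G}$:
\[
\int_F\operatorname{Div}_F X\dd\mu_k=\sum_{G\prec F}\int_G (x-z_F)\cdot n_{F,G}\,g\dd\mu_{k-1}.
\]
On a relative facet $G$, the quantity $(x-z_F)\cdot n_{F,G}$ is constant, because $G$ lies in a hyperplane of $\aff F$ orthogonal to $n_{F,G}$. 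This constant equals $h_{F,G}$, so the right-hand side is $\sum_G h_{F,G}I(G,\alpha)$.

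Next, compute the tangential divergence: $\operatorname{Div}_F X=k\,g+(x-z_F)\cdot\nabla_F g$. The first term holds because the divergence of the identity-type field $x-z_F$ in a $k$-dimensional affine space is $k$. Since $x-z_F$ is tangent, $(x-z_F)\cdot\nabla_F g=(x-z_F)\cdot\nabla g$. Now apply Euler's identity to the homogeneous $g$: $x\cdot\nabla g=(q-|\alpha|)g$. Then write $z_F\cdot\nabla g=\sum_i (z_F)_i D^{\alpha+e_i}p$. Integrating over $F$ gives
\[
\int_F\operatorname{Div}_FX\dd\mu_k=(k+q-|\alpha|)I(F,\alpha)-\sum_i (z_F)_iI(F,\alpha+e_i).
\]
Rearranging yields \eqref{eq:rec}. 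When $|\alpha|=q$, $g$ is constant, so $\nabla g=0$ and the second sum vanishes. The vertex statement is immediate because $\mu_0$ is the counting measure at $v$.

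The main technical point is justifying Stokes' theorem on each face in the non-convex setting. This requires that $F$ (for $k\ge1$) be a bounded Lipschitz polyhedral region in $\aff F$ whose boundary decomposes, up to $\mu_{k-1}$-null sets, into the oriented cells $G\prec F$ with consistent outward conormals. For convex faces this is standard. For the non-convex root domain it follows from the Lipschitz assumption together with the supplied compatible orientation. For lower-dimensional boundary cells, one assumes each cell is a polyhedral region in its affine hull whose boundary cells are listed with the correct signs, which is exactly the input hypothesis.

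A secondary check is that $h_{F,G}$ does not depend on the choice of $x\in G$. This holds because $G$ spans a relative hyperplane in $\aff F$ with normal $n_{F,G}$. The argument is also unaffected by whether $z_F$ lies inside $F$, since only $z_F\in\aff F$ is used.
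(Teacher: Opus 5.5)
Your proposal is correct and follows the paper's proof: you apply the intrinsic Gauss--Green formula in $\aff F$ to $X=(x-z_F)D^\alpha p$, use tangency of $x-z_F$ to replace $\nabla_F$ by $\nabla$, get the divergence from Euler's identity, and use the constant flux factor $h_{F,G}$ on each relative facet. The one item in the paper's proof you leave out is its closing remark that $k+q-|\alpha|$ drops by one along every branch, so the recursion always ends at vertex states; your extra details on the $|\alpha|=q$ case and the non-convex Stokes hypotheses are consistent with the paper's setting.
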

\begin{proof}
For $g=D^\alpha p$, apply the intrinsic Gauss--Green formula in $\aff F$ to 
the tangential vector field $X=(x-z_F)g$, and observe that
\[\langle \,x-z_F,\nabla_F g(x)\,\rangle\,=\,\langle \,x-z_F,\nabla g(x)\,\rangle\,,\quad\forall x\in\aff F\,.\]
Then  using by Euler's identity, $\mathrm{Div}_F((x-z_F)g)$ reads
$(k+q-|\alpha|)g-z_F\cdot\nabla g$. Its flux on $G$  is
$h_{F,G}\,g$, which yields \eqref{eq:rec}.  Finally, along every branch,
$k+q-|\alpha|$ decreases by one at each step,  and therefore one terminates at a vertex.
\end{proof}
Theorem \ref{th:rec} is a compact reformulation of the recurrence used in
\cite{Lasserre1998,ChinLasserreSukumar2015,AntoniettiHoustonPennesi2018,RadleyHoustonHubbard2024}.
The tangential divergence makes clear that Stokes' theorem is applied in the
successive affine spaces of lower-dimensional faces. 
Next, formal propagation of \eqref{eq:rec}, and collecting equal terminal
vertex--multi-index pairs $(\xi_j,\alpha^{(j)})$, yields
\begin{equation}\label{eq:hni}
 \int_Pp(x)\dd x=\sum_{j=1}^NC_j^{(q)}D^{\alpha^{(j)}}p(\xi_j),
 \qquad p\in\mathcal H_q.
\end{equation}
Fix an ordered monomial basis column $v_q = (x^\gamma)_{\vert\gamma\vert=q}\in\R^m$.
With $\lambda_j(p) := D^{\alpha(j)}p(\xi_j)$, define $E_q \in\R^{m\times N}$
by $(E_q)_{aj}:=\lambda_j((v_q)_a)$, and let $b_q: =\int_P v_q(x) dx\in \R^{m}$ . 
Thus \eqref{eq:hni} applied to the basis elements reads $b_q=E_q\,C^{(q)}$.
Equivalently, with the standard distributional convention,
\begin{equation}\label{eq:distribution}
 \mathcal T_{P,q}=
 \sum_{j=1}^N(-1)^{|\alpha^{(j)}|}C_j^{(q)}
 \partial^{\alpha^{(j)}}\delta_{\xi_j},\qquad
 \langle\mathcal T_{P,q},p\rangle=\int_Pp.
\end{equation}
The equality is only on $\mathcal H_q$; it is \emph{not} equality with Lebesgue
measure on an unrestricted test space.

The construction has two stages.  First, $(P,q)$, oriented incidence data,
and anchors determine the list $(\xi_j,\alpha^{(j)})$ and the vector
$C^{(q)}$, without selecting $p$.  Second, upon request, any polynomial $p\in\mathcal{H}_q$ is
integrated by the scalar sum \eqref{eq:hni}.  Thus the vector of moments 
$b_q$ is an output of HNI.

\paragraph{Coefficient-propagation algorithm.}
For each ``state" $(F,\alpha)$, store a sparse map $\mathscr{C}(F,\alpha)$ from terminal pairs
$(v,\beta)$ to coefficients.  Process states recursively by decreasing
$\dim F+q-|\alpha|$: initialize a vertex state by
$\{(v,\alpha)\mapsto1\}$; otherwise apply \eqref{eq:rec}, i.e. formally,
\[\mathscr{C}(F,\alpha)=\frac{1}{k+q-\vert\alpha\vert}\left[
 \sum_{G\prec F}h_{F,G}\mathscr{C}(G,\alpha)
 +\sum_{i=1}^d(z_F)_i\mathscr{C}(F,\alpha+e_i)\right],\]
memoize\footnote{store the result of a computation so that it can be reused when the same input or state occurs again.} each child
state, add its scaled map, collect duplicate terminal pairs, and remove exact
zeros.  The returned map at $(P,0)$ is $C^{(q)}$.  For floating-point geometry
the implementation uses a declared zero tolerance; oriented incidences enter
through the signed conormals in $h_{F,G}$.  This is the complete logical
pseudocode while Online Resource~1 provides data structures and detailed propagation.

If $b_\alpha=\int_Px^\alpha$ are known, then trivially
$\int_Pp=\sum_{|\alpha|=q}(b_\alpha/\alpha!)D^\alpha p(v)$ for any arbitrary
$v\in\R^d$ (even $v\not\in P$); no computation is involved.
This is the unrestricted $m_q=\dim\mathcal H_q$ term-count baseline: but it
pre-supposes the complete moment vector $b_q$ and uses every derivative of order
$q$ (which is just coefficient extraction).  The anchor counts below instead
measure pruning of the structured Euler--Stokes output.  The 12- and 31-term
post-processed formulae for the cube 
contain only
derivatives of order at most two; the counts for the polygon  are not claimed to beat
the unrestricted moment formula.

More generally, after any algorithm has produced the basis moment vector
$b_q$, a vertex-supported mixed-derivative vector can be sought by solving
$E_qC=b_q$ over any prescribed candidate family $C$.  Thus neither point support,
vertex support, nor existence of such a coefficient vector is the novelty.
The proposed algorithm constructs one such $C$ directly through local Euler--Stokes
propagation, retaining the face-incidence and anchor structure and avoiding
this moment-first global post-processing step.

For a general polynomial $f=\sum_{s=0}^Qf_s$, apply the degree-$s$ functional
to each homogeneous component and sum the results.  This decomposition is
relative to the chosen affine origin and changes under translation, although
the integral of $f$ does not.

By contrast, unknown-node positive moment-fitting cubature takes the moments
as input and searches for nodes in $P$ and nonnegative weights satisfying
nonlinear moment equations.  HNI fixes vertex support, avoids node location,
and instead uses signed derivative coefficients.  This comparison does not
apply to every cubature construction, in particular not to fixed-node linear
methods or compression of an already available positive formula.

Convexity is not used in the divergence identity.  For a non-convex polygon,
the ordered exterior cycle, oppositely oriented hole cycles, and their
induced outward conormals are required.  In three dimensions, consistently
oriented polygonal faces and signed face--edge--vertex incidences are needed.
Throughout, the incidence sign is absorbed into the parent-relative conormal
and hence into $h_{F,G}$; no second sign is inserted in \eqref{eq:rec}.  The
supplied boundary cells are assumed to form a consistently oriented finite
polyhedral complex.  No subdivision of the volume into simplices or convex
cells is inherent in the construction; Online Resource~1 gives the complete
input convention.

\section{Riesz representation and structured post-processing}
Equip $\mathcal H_q$ with $\langle f,g\rangle_P=\int_Pfg$.  For a basis
column $v_q(x)$ define
\[
 M_q\,:=\int_Pv_qv_q^{\mathsf T},\qquad b_q\,:=\int_Pv_q,
 \qquad K_{P,q}(x,y)\,:=v_q(x)^{\mathsf T}M_q^{-1}v_q(y),
\]
and
\begin{equation}\label{eq:Riesz}
 R_{P,q}(x)\,:=v_q(x)^{\mathsf T}M_q^{-1}b_q
           =\int_PK_{P,q}(x,y)\dd y.
\end{equation}
Then $\int_Pp=\langle p,R_{P,q}\rangle_P$.  This is the ``exact-degree" kernel
of $\mathcal H_q$, not the usual ``degree-at-most" Christoffel setting.  In
particular, $K_{P,q}(0,0)=0$ for $q>0$ because $q(0)=0$ for all
$q\in\mathcal H_q$. Next, for $\lambda_j(p)=D^{\alpha^{(j)}}p(\xi_j)$, set
\[
 \kappa_j(x)\,:=\left.D_y^{\alpha^{(j)}}K_{P,q}(x,y)
 \right|_{y=\xi_j},\qquad
 Q_C(p)\,:=\sum_jC_j\lambda_j(p).
\]
\begin{theorem}[Riesz residual and coefficient nonuniqueness]\label{th:compression}
For every $C\in\R^N$,
\begin{equation}\label{eq:wce}
 \sup_{\substack{p\in\mathcal H_q\\\|p\|_{L^2(P)}=1}}
 |\,\textstyle\int_Pp-Q_C(p)|
 =\left\|R_{P,q}-\sum_jC_j\kappa_j\right\|_{L^2(P)}.
\end{equation}
Let $(\phi_a)_{a=1}^m$ be an orthonormal basis, and 
$J_{aj}:=\lambda_j(\phi_a)$, $\rho_a:=\int_P\phi_a$.  Then:
\begin{enumerate}
\item $Q_C$ is exact iff $JC=\rho$, equivalently
$R_{P,q}=\sum_jC_j\kappa_j$;
\item all exact vectors in this information set form $C_0+\ker J$;
\item if $\rho\in\operatorname{range}J$ and $W\succ0$, the exact vector
minimizing $C^{\mathsf T}WC$ is
\[
 C_W=W^{-1}J^{\mathsf T}(JW^{-1}J^{\mathsf T})^\dagger\rho\,,
\]
where $(JW^{-1}J^{\mathsf T})^\dagger$ denotes the Moore-Penrose pseudoinverse;
\item after restricting the available derivatives, $J^\dagger\rho$ is the
minimum-norm least-squares rule and its squared worst-case error is
$\|\rho-JJ^\dagger\rho\|_2^2$.
\end{enumerate}
\end{theorem}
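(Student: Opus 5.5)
The plan is to work entirely in the finite-dimensional Hilbert space $(\mathcal H_q,\langle\cdot,\cdot\rangle_P)$ and identify every linear functional with its Riesz representative. Since $P$ has nonempty interior, no nonzero polynomial vanishes identically on $P$, so $M_q\succ0$ and $\langle\cdot,\cdot\rangle_P$ is a genuine inner product. The function $K_{P,q}$ is then the reproducing kernel of $\mathcal H_q$. Indeed, for $p=c^{\mathsf T}v_q$ we have
\[
\int_P K_{P,q}(x,y)p(y)\dd y=v_q(x)^{\mathsf T}M_q^{-1}M_qc=p(x).
\]
By the same computation, $\langle p,R_{P,q}\rangle_P=c^{\mathsf T}M_qM_q^{-1}b_q=\int_Pp$, so $R_{P,q}$ represents integration. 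Differentiating the reproducing identity in $y$ (legitimate, since $K_{P,q}$ is a polynomial in $y$ and $p$ is a finite combination of basis elements) gives $\langle p,\kappa_j\rangle_P=D^{\alpha^{(j)}}p(\xi_j)=\lambda_j(p)$. Hence $\kappa_j$ is the Riesz representative of $\lambda_j$, and $\kappa_j\in\mathcal H_q$ because it is a linear combination of the entries of $v_q(x)$.

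With these representatives, $\int_Pp-Q_C(p)=\langle p,R_{P,q}-\sum_jC_j\kappa_j\rangle_P$ for every $p\in\mathcal H_q$. Both arguments lie in $\mathcal H_q$, so Cauchy--Schwarz gives the upper bound in \eqref{eq:wce}. Equality is attained by taking $p$ to be the normalized residual, or any $p$ if the residual is zero. Item~1 then follows from \eqref{eq:wce}: $Q_C$ is exact iff the residual vanishes.

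To get the matrix form, expand everything in the orthonormal basis $(\phi_a)$. The coordinates of $R_{P,q}$ are $\langle R_{P,q},\phi_a\rangle_P=\rho_a$, and those of $\kappa_j$ are $\lambda_j(\phi_a)=J_{aj}$. Parseval then gives
\[
\Bigl\|R_{P,q}-\sum_jC_j\kappa_j\Bigr\|_{L^2(P)}=\|\rho-JC\|_2 .
\]
So exactness is equivalent to $JC=\rho$. Item~2 is the standard description of the solution set of a consistent linear system.

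For item~3, I will minimize $C^{\mathsf T}WC$ subject to $JC=\rho$. The substitution $u=W^{1/2}C$ turns this into finding the minimum-Euclidean-norm solution of $(JW^{-1/2})u=\rho$. That solution is $u=(JW^{-1/2})^\dagger\rho$, which lies in the row space of $JW^{-1/2}$. The main technical step is the pseudoinverse identity $A^\dagger=A^{\mathsf T}(AA^{\mathsf T})^\dagger$, applied with $A=JW^{-1/2}$; it gives $C_W=W^{-1/2}u=W^{-1}J^{\mathsf T}(JW^{-1}J^{\mathsf T})^\dagger\rho$. I would justify the identity via the SVD of $A$, or alternatively check directly that the candidate $C_W$ satisfies $JC_W=\rho$ and that $W C_W\in\operatorname{range}J^{\mathsf T}$, which is orthogonal to $\ker J$ in the Euclidean inner product. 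The latter uses $\rho\in\operatorname{range}J=\operatorname{range}(JW^{-1}J^{\mathsf T})$, so that $(JW^{-1}J^{\mathsf T})(JW^{-1}J^{\mathsf T})^\dagger$ acts as the identity on $\rho$. Uniqueness of the minimizer follows from strict convexity of $C^{\mathsf T}WC$ on the affine set $C_0+\ker J$.

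For item~4, restricting the derivatives means deleting columns of $J$, and the rule is no longer required to be exact. By the Parseval form of \eqref{eq:wce}, the worst-case error of $C$ is $\|\rho-JC\|_2$. Minimizing it is therefore an ordinary least-squares problem, and $J^\dagger\rho$ is its minimum-norm minimizer by the defining property of the Moore--Penrose inverse. Since $JJ^\dagger$ is the orthogonal projector onto $\operatorname{range}J$, the minimal squared worst-case error is $\|\rho-JJ^\dagger\rho\|_2^2$. I expect the only real care to be needed in item~3, in the pseudoinverse bookkeeping when $J$ is rank-deficient; everything else is Riesz representation plus Parseval.
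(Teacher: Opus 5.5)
Your proposal is correct and follows the paper's route. The reproducing property rewrites the error as $\langle p,R_{P,q}-\sum_jC_j\kappa_j\rangle_P$, the functional norm then gives \eqref{eq:wce}, and orthonormal coordinates reduce items 1--4 to standard constrained and unconstrained least squares; you simply spell out, in more detail than the paper's one-line proof, the positivity of $M_q$, the Parseval identity $\|R_{P,q}-\sum_jC_j\kappa_j\|_{L^2(P)}=\|\rho-JC\|_2$, and the pseudoinverse identity. One cosmetic point: the identity you differentiate in $y$ should be the symmetric form $\int_P K_{P,q}(x,y)p(x)\dd x=p(y)$, since in your display $y$ is the integration variable.
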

\begin{proof}
The reproducing property yields
$\int_Pp-Q_C(p)=\langle p,R_{P,q}-\sum_jC_j\kappa_j\rangle_P$.
Taking the functional norm proves \eqref{eq:wce}; orthonormal coordinates and
standard constrained least squares give the remaining statements.
\end{proof}

The Hilbert-space identity and pseudoinverse formulas are standard; their
role here is to certify and reduce the explicit HNI output.  A coefficient
norm is meaningful only after derivatives of different orders have been
scaled, so $W$ must encode a proposed normalization or evaluation cost.
In floating point, its is appropriate to use scaled or orthonormal polynomials and
QR or SVD (for rank).  The exact-rational reductions
below establish algebraic exactness, not a general stability result.

\begin{proposition}[Anchor changes]\label{prop:anchors}
Embed the exact outputs $C$ and $\widetilde C$ from two anchor choices in a
common vertex-derivative information set.  Then $J(C-\widetilde C)=0$.
Thus anchors can change the displayed coefficients, but not their induced
functional on $\mathcal H_q$.
\end{proposition}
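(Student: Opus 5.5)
The plan is to reduce the statement to item 1 of Theorem~\ref{th:compression}, using the exactness of HNI for every anchor choice. Fix the two anchor families $(z_F)$ and $(\widetilde z_F)$. Let $\{(\xi_j,\alpha^{(j)})\}_{j=1}^N$ be the union of the terminal vertex--multi-index pairs produced by the two propagations. Extend $C$ and $\widetilde C$ by zero to vectors in $\R^N$. Since zero entries contribute nothing, the functionals are unchanged: $Q_C(p)=\sum_j C_j\lambda_j(p)$, and likewise for $\widetilde C$.

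The key step is that each propagated output is exact on $\mathcal H_q$. For any anchor choice, Theorem~\ref{th:rec} holds identically for every $p\in\mathcal H_q$, because the anchor $z_F\in\aff F$ is arbitrary in its proof. The coefficient-propagation algorithm only performs formal linear substitutions of \eqref{eq:rec}, starting from $(P,0)$ and ending at vertex states, where $I(\{v\},\beta)=D^\beta p(v)$. By induction on $\dim F+q-|\alpha|$, the map $\mathscr C(F,\alpha)$ therefore satisfies $I(F,\alpha)=\sum_{(v,\beta)}\mathscr C(F,\alpha)_{(v,\beta)}D^\beta p(v)$ for all $p\in\mathcal H_q$. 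Collecting duplicate terminal pairs and removing exact zeros preserve this identity. Hence \eqref{eq:hni} holds for both outputs, that is, $Q_C(p)=Q_{\widetilde C}(p)=\int_Pp$ for all $p\in\mathcal H_q$.

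Taking $p=\phi_a$ for each element of the orthonormal basis gives $JC=\rho=J\widetilde C$, so $J(C-\widetilde C)=0$. Equivalently, by Theorem~\ref{th:compression}(2), both vectors lie in the same coset $C_0+\ker J$. The induced functional $p\mapsto\sum_j(C_j-\widetilde C_j)\lambda_j(p)$ then vanishes on the basis and hence, by linearity, on all of $\mathcal H_q$. This is the second sentence of the statement. Since the displayed coefficients may differ, $C-\widetilde C$ can be a nonzero element of $\ker J$.

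The only delicate point is the induction that justifies exactness of the formal propagation. One must check that every state $(F,\alpha+e_i)$ and $(G,\alpha)$ invoked has a strictly smaller value of $\dim F+q-|\alpha|$, so the recursion is well founded. One must also check that, in the non-convex case, the orientation signs are absorbed into $h_{F,G}$ consistently for both anchor choices. The first follows from the termination remark in the proof of Theorem~\ref{th:rec}. The second follows from the standing assumption that the boundary complex is consistently oriented, since the anchors enter only through $h_{F,G}$ and $z_F$, never through the signs.
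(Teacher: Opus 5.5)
Your proposal is correct and follows the same route as the paper: the paper gives no separate proof, because the proposition follows directly from Theorem~\ref{th:compression}(1) once both anchor choices give exact formulas, so that $JC=\rho=J\widetilde C$. Your induction showing that the formal propagation is exact for any anchors $z_F\in\aff F$ only makes explicit the step the paper takes for granted from Theorem~\ref{th:rec}.
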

The choice can change not only the coefficients but also their number.  If an
anchor lies on a relative facet, the corresponding height $h_{F,G}$ vanishes
and that boundary branch disappears.  Coherent face-vertex anchors therefore
can produce much sparser formal outputs, although in general they do not
solve the separate sparsest-representation problem.

\paragraph{A finite vertex-anchor search for polygons.}
The heuristic modeling choice is to restrict the root anchor of an oriented
$n$-gon to its vertices and every edge anchor to an endpoint.  Within this
class, testing all $n\,2^n$ assignments is exhaustive and returns a
global minimum of collected support; it does not optimize over arbitrary
affine anchors.  Its exponential cost precludes a general efficiency claim.
Nevertheless,
the cited PDE applications use HNI predominantly in dimensions two and
three.  For a fixed low-dimensional element with a moderate number of faces,
the analogous finite enumeration---restricting each face anchor to a vertex
of that face and each edge anchor to an endpoint---is an implementable offline
preprocessing step.  Its cost is then amortized because the selected
coefficient vector is reused in all subsequent polynomial integrations and
element assemblies.  Section~5 reports timings through $n=10$ and provides
reproducible non-convex and non-symmetric tests.

\paragraph{Compression does not require $M_q$.}
This point is to prevent a possible circular interpretation.  Let
$C^{\rm raw}\in\R^{N_{\rm raw}}$ be the coefficient vector produced by the
proved Euler--Stokes recurrence, let
$E_{\rm raw}\in\R^{m\times N_{\rm raw}}$ evaluate its functionals on the
chosen basis $v_q$, and let
$E_{\rm red}\in\R^{m\times N_{\rm red}}$ correspond to a proposed reduced
information set.  Exact algebraic compression simply reads
\begin{equation}\label{eq:compression-without-M}
 E_{\rm red}C^{\rm red}=E_{\rm raw}C^{\rm raw}.
\end{equation}
Thus neither $M_q$ nor a degree-$2q$ HNI calculation is needed to construct
or verify a compression relative to the already proved raw formula.  The
moment matrix is needed only to equip $\mathcal H_q$ with the intrinsic
$L^2(P)$ metric, form $R_{P,q}$ and the kernel, and quantify the worst-case
error of an inexact restricted formula.
Also as a useful and cheap sanity check, every exact formula must satisfy
\begin{equation}\label{eq:scalar}
 \langle\mathcal T_{P,q},R_{P,q}\rangle
 =\int_PR_{P,q}=\|R_{P,q}\|_{L^2(P)}^2=b_q^{\mathsf T}M_q^{-1}b_q\,,
\end{equation}
but full exactness is certified by \eqref{eq:wce}, not by this scalar identity
alone.

Interestingly, the explicit coefficients also carry a cross-degree compatibility that is
not apparent from the two independent Euler--Stokes propagations.

\begin{corollary}[Degree-$q$/degree-$2q$ link]\label{cor:cross-degree}
For $s\in\{q,2q\}$, let
\[
 m_s=\dim\mathcal H_s=\binom{d+s-1}{s},\qquad
 C^{(s)}\in\R^{N_s},\qquad
 E_s\in\R^{m_s\times N_s},
\]
where $v_s\in\R^{m_s}$ is a basis column and
$(E_s)_{aj}=\lambda_j^{(s)}((v_s)_a)$.  If the two HNI formulas are exact,
then
\begin{equation}\label{eq:cross-moments}
 b_q=E_qC^{(q)},\qquad
 (M_q)_{ab}=\sum_{j=1}^{N_{2q}}C_j^{(2q)}
 \lambda_j^{(2q)}((v_q)_a(v_q)_b).
\end{equation}
Consequently $r=M_q^{-1}b_q$ recovers
$R_{P,q}=v_q^{\mathsf T}r$.  Define
$\mathcal P(r)\in\R^{m_q\times m_{2q}}$ by
\begin{equation}\label{eq:product-matrix}
 v_q(x)\,R_{P,q}(x)\,=\,\mathcal P(r)v_{2q}(x).
\end{equation}
Then the coefficient vectors satisfy
\begin{equation}\label{eq:cross-link}
 \boxed{\ \mathcal P(r)\,E_{2q}\,C^{(2q)}=E_q\,C^{(q)}.\ }
\end{equation}
For fixed and known $r$, this is a linear relation between the two HNI
vectors, with block matrix
\[
 [\,\mathcal P(r)\,E_{2q}\; -E_q\,]
 \in\R^{m_q\times(N_{2q}+N_q)}.
\]
Conversely, exact HNI vectors at degrees $q$ and $2q$ give $b_q$ and $M_q$
through \eqref{eq:cross-moments}, and hence determine $r$.
\end{corollary}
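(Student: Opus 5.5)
The plan is to obtain everything from exactness of the two HNI formulas \eqref{eq:hni}, that is, $\int_P p = Q_{C^{(s)}}(p)$ for every $p\in\mathcal H_s$ and $s\in\{q,2q\}$, together with linearity of the functionals $\lambda_j^{(s)}$. The first identity in \eqref{eq:cross-moments} is already recorded after \eqref{eq:hni}: apply the degree-$q$ formula to each basis element $(v_q)_a$. For the second identity, observe that $(v_q)_a(v_q)_b$ is a product of two degree-$q$ forms. It is therefore homogeneous of degree $2q$, so it lies in $\mathcal H_{2q}$. Exactness of the degree-$2q$ formula then gives $(M_q)_{ab}=\int_P (v_q)_a(v_q)_b=\sum_j C^{(2q)}_j\lambda^{(2q)}_j((v_q)_a(v_q)_b)$.

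Next I would verify that $\mathcal P(r)$ is well defined and recovers $R_{P,q}$. By \eqref{eq:Riesz}, $R_{P,q}=v_q^{\mathsf T}M_q^{-1}b_q=v_q^{\mathsf T}r$. The matrix $M_q$ is invertible because it is the Gram matrix of a basis of $\mathcal H_q$ in $L^2(P)$, and $P$ has nonempty interior, so a nonzero polynomial cannot vanish in $L^2(P)$. Each entry $(v_q)_a(x)R_{P,q}(x)$ is again in $\mathcal H_{2q}$. Since $v_{2q}$ is a basis of $\mathcal H_{2q}$, each such entry has a unique coefficient row, and these rows define $\mathcal P(r)$ through \eqref{eq:product-matrix}; its entries are linear in $r$.

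For the boxed relation \eqref{eq:cross-link}, I would compute componentwise. By linearity of the $\lambda^{(2q)}_j$ and the definition of $E_{2q}$, the $a$-th entry of $\mathcal P(r)E_{2q}C^{(2q)}$ equals $\sum_j C^{(2q)}_j\lambda^{(2q)}_j\bigl(\sum_c \mathcal P(r)_{ac}(v_{2q})_c\bigr)$. This is $Q_{C^{(2q)}}((v_q)_aR_{P,q})$, and exactness at degree $2q$ gives $\int_P (v_q)_aR_{P,q}$. By the Riesz property $\int_P p=\langle p,R_{P,q}\rangle_P$, this last quantity equals $\int_P (v_q)_a=(b_q)_a$. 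Alternatively, one can compute $(M_qr)_a=(b_q)_a$ directly. Finally, $(b_q)_a=(E_qC^{(q)})_a$ by the first part.

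The converse is immediate: \eqref{eq:cross-moments} expresses $b_q$ and $M_q$ through the two vectors, and invertibility of $M_q$ then determines $r=M_q^{-1}b_q$. The block-matrix statement is just a rewriting of \eqref{eq:cross-link} as $[\mathcal P(r)E_{2q}\;-E_q]\,(C^{(2q)};C^{(q)})=0$ for fixed $r$.

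There is no real obstacle. The one point that needs care is the closure step: products of degree-$q$ forms, and the product $v_qR_{P,q}$, must lie in $\mathcal H_{2q}$, so that the degree-$2q$ formula applies. This holds even though \eqref{eq:hni} is only an identity on homogeneous forms of the exact degree. I would also state explicitly that invertibility of $M_q$ relies on $P$ being full-dimensional.
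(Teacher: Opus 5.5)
Your proposal is correct and follows the paper's own route. You apply the two exact HNI formulas to the basis entries and to their products, then read each entry of $\mathcal P(r)E_{2q}C^{(2q)}$ as the degree-$2q$ rule applied to $(v_q)_aR_{P,q}$ and use the Riesz identity $\int_P h=\int_P hR_{P,q}$. Your extra checks (closure of products in $\mathcal H_{2q}$, well-definedness of $\mathcal P(r)$, and invertibility of $M_q$ from full-dimensionality of $P$) are details the paper's terse proof leaves implicit.
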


\begin{proof}
The first identities are the two HNI formulas applied to the entries of
$v_q$ and to their products.  For each entry $h$ of $v_q$, the Riesz identity
gives $\int_Ph=\int_PhR_{P,q}$.  Applying respectively the degree-$q$ and
degree-$2q$ formulas and using \eqref{eq:product-matrix} gives
\eqref{eq:cross-link}.
\end{proof}

The relation is elementary once $M_qr=b_q$ is introduced, but its
coefficient-level interpretation is not.  The degree-$q$ vector alone does
not supply $M_q$: its entries have degree
$2q$, so the second HNI run or another exact moment engine is essential.  If
$M_q=LL^{\mathsf T}$ is a Cholesky factorization, then $L^{-1}v_q$ is an
$L^2(P)$-orthonormal family.  The moment recovery and orthonormalization are
standard; the HNI-specific feature is that \eqref{eq:cross-link} is written
directly in terms of the two reusable vertex-distribution coefficient
vectors.  When both $b_q$ and $M_q$ are computed by the same HNI recurrence
at degrees $q$ and $2q$, the Riesz and cross-degree identities are useful
internal consistency checks and post-processing relations, but they are not
independent validations of HNI.  An external numerical validation must use
moments obtained analytically or from an independent integration
implementation, as in Section~5.

\section{Positive value-only representation}
Applied to a basis of $\mathcal H_q$, the overlooked Richter's theorem for measurable
integrable functions gives a positive value-only formula with nodes in $P$
and at most $\binom{d+q-1}{q}$ terms, sharper on this exact-degree space than
the Tchakaloff bound $\binom{d+q}{q}$ for $\mathbb P_{\le q}$
\cite{Richter1957,schmudgen,Putinar1997,BayerTeichmann2006}.  It is an
existence result and does not locate the nodes; HNI instead fixes vertex
support and uses signed derivatives.

\section{Examples and numerical checks}
For $B=[-1,1]^3$, symmetry and product moments give
\begin{equation}\label{eq:boxR}
 R_{B,4}(x)=\frac{595}{363}\sum_{i=1}^3x_i^4
 -\frac{350}{363}\sum_{i<j}x_i^2x_j^2.
\end{equation}
Using face centers as anchors, the formal recurrence produces 31 nonzero
multi-indices at each of eight vertices, hence 248 terms.  On the 15
monomials of $\mathcal H_4$, the resulting $15\times248$ evaluation matrix
has exact rank 15 and nullity 233.  Alternatively, encode a face by
$s\in\{-1,0,1\}^3$ and choose the coherent vertex anchor
$(z_s)_i=s_i$ when $s_i\ne0$ and $(z_s)_i=-1$ otherwise.  Every incident
facet through $z_s$ then has zero height, and the recurrence has only 22
nonzero terms, all supported at the opposite vertex $(1,1,1)$.  Exact
rational product moments verify all 15 identities.  Thus anchor selection
provides a drastic systematic reduction of the formal output, though it does
not by itself produce a smallest rule.  For a reproducible algebraic
post-processing, take as candidate information all derivatives of order at
most two at the four vertices of the facet $x_1=-1$.  Order the resulting 40
columns lexicographically by vertex, total derivative order, and multi-index.
The exact $15\times40$ rational evaluation matrix has rank 15.  Reduced row
elimination from left to right, with every nonpivot unknown set to zero,
yields an exact 12-term formula supported at three vertices of that single
face.  This basic solution is not canonical and changes if the columns are
reordered.  Online Resource~2 displays its coefficients.  Exact rational
arithmetic verifies all 15 monomial identities and the defining Riesz
identities for \eqref{eq:boxR}.

\noindent
{\bf Restricted-data interpretation.}
The practical distinction from a moment table concerns the representation in
which the integrand is supplied.  The vector $b_4$ is directly reusable when
the 15 monomial coefficients of $p\in\mathcal H_4(\mathbb R^3)$ are known.
The reduced formula is directly reusable when instead the indicated local
vertex data are available: it recovers the integral from 12 values and
derivatives of order at most two, without reconstructing all 15 monomial
coefficients or evaluating the complete fourth-order derivative family of
the one-point Taylor representation.  The reduced rule computes the scalar integral \(\int_Pp\) from these 12 local data only.
This is an information-model distinction, not a universal cost claim.

Next, consider $B_4=[-1,1]^4$ and choose face centers as anchors.  Put
\[
 \mathcal A_4^{(4)}=\{\alpha\in\mathbb N_0^4:|\alpha|\le4,
                         \ \#\operatorname{supp}\alpha\le3\}.
\]
The same path argument gives
\begin{equation}\label{eq:four-cube-HNI}
 \int_{B_4}p=\sum_{\varepsilon\in\{\pm1\}^4}
 \sum_{\alpha\in\mathcal A_4^{(4)}}
 \bar c_\alpha\varepsilon^\alpha D^\alpha p(\varepsilon),
 \qquad p\in\mathcal H_4,
\end{equation}
where $\bar c_\alpha$ depends only on the decreasing rearrangement of
$\alpha$; it is constant on each coordinate-permutation class of multi-indices. The 11 resulting distinct coefficients are tabulated in Online Resource 2.
There are 69 multi-indices per vertex and hence $16\times69=1104$ nonzero
face-center terms.  With the analogous coherent face-vertex rule, the formal
recurrence instead has 35 nonzero terms, all supported at $(1,1,1,1)$, and
has exact residual zero on the 35 monomials of $\mathcal H_4$.  On derivatives
of order at most two at the eight vertices of $x_1=-1$, the $35\times120$
evaluation matrix has rank 35 and exact elimination gives a 31-term formula.
Thus $1104\to31$, without a claim that 31 is minimal.  In general, the rank
bound $\dim\mathcal H_4(\mathbb R^4)=35$ ensures a basic solution with at most
35 terms in any spanning candidate family, but finding a sparsest support is
left as a separate combinatorial problem.

Direct product moments provide an independent exact reference without SymPy.
Formula \eqref{eq:four-cube-HNI}, the 35-term vertex-anchor output, and the
31-term compression agree exactly
with that reference on all 35 monomials of $\mathcal H_4$ and on 100 random
quartics with integer coefficient.  Its Riesz identity also has exact residual
zero.  Online Resource~2 gives the Riesz polynomial, all 11 classes,
and CSV files for the 1104-, 35-, and 31-term formulas.

Table~\ref{tab:checks} summarizes independent checks.  Every discrepancy is
an absolute error: the first number is the maximum on the complete monomial
basis and the second the maximum on 100 seeded random polynomials.  
For the rational examples, a zero reported for the basis residual is an exact zero obtained in rational arithmetic. Any nonzero residual reported for the random-polynomial test arises only from the subsequent binary64 evaluation.
The hexagon rows
use binary64 geometry, collect coefficients below $10^{-12}$ as zero, and
report both observed floating-point maxima.  The proposed
coefficient vector was formed once for each $(P,q)$ and reused on the full
monomial basis and 100 random polynomials.  Reference moments came from
product integration or oriented triangle integration and, independently,
from SymPy's \texttt{polytope\_integrate}, whose documentation points to
\cite{ChinLasserreSukumar2015}.  SymPy \cite{MeurerEtAl2017} was used as a moment engine and does
not itself output our explicit vertex-distribution coefficients.
All term counts depend on the anchor policy, which is stated in the table.
These counts measure the size of the propagated representation relative to
the unrestricted $m_q$ baseline displayed in the table, not wall-clock cost
or numerical stability: derivative order and normalization matter.
\begin{table}[t]
\caption{Propagated distributions, unrestricted moment baseline, and
verification (basis maximum/random-sample maximum).  ``Best vertex'' means
best only within the declared finite anchor class.}
\label{tab:checks}\centering\small
\resizebox{\textwidth}{!}{\begin{tabular}{lrrrc}
\toprule
domain, degree & propagated/restricted terms & $m_q$ & reference & max discrepancy\\
\midrule
cube, $q=4$ &248 center; 22 vertex; 12 order-$\le2$&15& exact product moments &$0/1.78\!\times\!10^{-15}$\\
four-cube, $q=4$ &1104 center; 35 vertex; 31 order-$\le2$&35& exact product moments &$0/0$\\
non-convex L-polygon, $q=4$ &54 sparse affine; 30 best vertex&5& exact triangle/SymPy &$0/0$\\
seeded non-symmetric convex hexagon, $q=4$ &90 midpoint; 30 best vertex&5& oriented triangle &$5.56\!\times\!10^{-17}/1.11\!\times\!10^{-16}$\\
seeded non-symmetric convex hexagon, $q=8$ &270 midpoint; 90 best vertex&9& oriented triangle &$1.39\!\times\!10^{-17}/4.17\!\times\!10^{-17}$\\
\bottomrule
\end{tabular}}
\end{table}

\noindent
{\bf Cost of the finite anchor search.}
A separate reproducible benchmark at $q=4$ propagates every one of the
$n\,2^n$ assignments independently on seeded perturbed convex polygons.  On an
Apple M3, the run grows from 64 assignments in 0.005 seconds for $n=4$ to
10,240 in 5.47 seconds for $n=10$, while reducing the midpoint-anchor counts
from 60 to 10 and from 150 to 60, respectively.  This supports offline use for
modest planar elements, not polynomial-time scalability.  The complete
$n=4,\ldots,10$ table in Online Resource~2 also reports
$S(C)=\sum_{v,\alpha}|C_{v,\alpha}|/L^{2+|\alpha|}$,
$L=\max_v\|v\|_2$.  Since $S(C)$ sometimes increases after support
minimization, fewer terms do not imply better conditioning.

\noindent
{\bf Other planar checks.}
For the non-convex L-polygon, sparse affine anchors give 54 terms and finite search over the
$6\cdot 2^6=384$-assignment gives 30; both reproduce its five exact
quartic moments.  For the seeded non-symmetric convex hexagon, the same class reduces
$90$ to $30$ terms at $q=4$ and reduces $270$ to $90$ at $q=8$.  Online Resource~2
records the vertices, anchors, terminal supports, complete coefficient tables,
scripts, benchmark output, and CSV files.
The identical minimizing anchor pattern in these two six-edge examples was
reproduced by independent runs.  Their common oriented six-cycle incidence
structure makes the selected endpoint anchors prune the same branches, even
though their coefficient values and moment vectors are different.

\section{Conclusion}
We have given a direct face-to-vertex construction of the reusable
coefficient functional which is implicit 
in previous algorithmic implementations of Euler--Stokes HNI.  
While terminal vertex-derivative evaluations already occur in the
established HNI descent, the contribution here is that equal terminal
functionals are explicitly collected and their coefficients retained from the
oriented face geometry before a moment basis or polynomial is selected.  It
provides a reusable functional adapted to vertex-derivative data, exposes
anchor dependence and nonuniqueness, enables exact restricted-information
post-processing and Riesz certification, and yields the degree-$q$/degree-$2q$ coefficient
relation.  In addition, the structured lift 
is particularly useful when
integrands are naturally supplied through local vertex data rather than
global monomial coefficients.  
The four-cube example illustrates  that dimension is not restricted to $2,3$, while the planar
tests show that finite searches over vertex anchors can prune this structured
output offline.  

Future work should identify FEM, VEM, or
Hermite-type settings in which this information model is advantageous and
seek scalable anchor selection with simultaneous sparsity and conditioning
control.\\

\noindent
{\bf Acknowledgement.} 
The author used OpenAI ChatGPT to assist with brainstorming and drafting, as well as generation of 
associated Python algorithmic implementations.  Python scripts and their outputs were checked using exact arithmetic and independent validations described in the manuscript. The author is solely responsible for the final content and conclusions.\\

\noindent
\textbf{Code and data availability.} 
The Python implementation and the input data for the numerical examples are publicly available on 
Zenodo at {\tt https://zenodo.org/}, DOI: 10.5281/zenodo.23086879


\bibliographystyle{spmpsci}
\bibliography{hni_jsc_short}
\end{document}